\title{A Tropical Extremal Problem with Nonlinear Objective Function and Linear Inequality Constraints\thanks{Advances in Computer Science: Proc. 6th WSEAS European Computing Conf. (ECC '12), WSEAS Press, 2012, pp.~146--151. (Recent Advances in Computer Engineering Series, Vol.~5)}
} 
\author{Nikolai Krivulin\thanks{Faculty of Mathematics and Mechanics, St.~Petersburg State University, 28 Universitetsky Ave., St.~Petersburg, 198504, Russia, 
nkk@math.spbu.ru.}
}
\date{}
\newtheorem{theorem}{Theorem}
\newtheorem{lemma}[theorem]{Lemma}
\newtheorem{corollary}[theorem]{Corollary}
\begin{document}

\maketitle

\begin{abstract}
We consider a multidimensional extremal problem formulated in terms of tropical mathematics. The problem is to minimize a nonlinear objective function, which is defined on a finite-dimensional semimodule over an idempotent semifield, subject to linear inequality constraints. An efficient solution approach is developed which reduces the problem to that of solving a linear inequality with an extended set of unknown variables. We use the approach to obtain a complete solution to the problem in a closed form under quite general assumptions. To illustrate the obtained results, a two-dimensional problem is examined and its numerical solution is given.
\\

\textit{Key-Words:} idempotent semifield, nonlinear functional, linear inequality, matrix trace, spectral radius, tropical extremal problem, closed-form solution.
\end{abstract}

\section{Introduction}

Models and methods of tropical (idempotent) mathematics \cite{Baccelli1993Synchronization,Cuninghame-Green1994Minimax,Kolokoltsov1997Idempotent,Golan2003Semirings,Heidergott2006Max-plus,Litvinov2007Themaslov,Butkovic2010Maxlinear} find expanding applications in many fields including engineering, computer science and operations research. There are a variety of real-life problems that can be represented and solved as tropical extremal problems. As related examples one can take idempotent algebra based solutions to scheduling \cite{Baccelli1993Synchronization,Heidergott2006Max-plus,Butkovic2009Introduction,Butkovic2009Onsome,Butkovic2010Maxlinear,Krivulin2012Algebraic} and location  \cite{Cuninghame-Green1994Minimax,Zimmermann2003Disjunctive,Tharwat2010Oneclass,Krivulin2011Algebraic,Krivulin2011Analgebraic,Krivulin2011Anextremal,Krivulin2012Anew} problems.

The tropical extremal problems are usually formulated as to minimize functionals, which are defined on finite-dimensional semimodules over idempotent semifields, subject to linear constraints in the form of equalities or inequalities. The problems under study may have linear objective functions as in  \cite{Zimmermann2003Disjunctive,Butkovic2009Introduction,Butkovic2010Maxlinear} or nonlinear function as in \cite{Krivulin2005Evaluation,Krivulin2006Solution,Krivulin2006Eigenvalues,Krivulin2009Methods,Krivulin2011Algebraic,Krivulin2011Analgebraic,Krivulin2011Anextremal,Gaubert2012Tropical,Krivulin2012Anew,Krivulin2012Solution}. In some cases both the objective function and the constraints in the problems appear to be nonlinear (see, e.g. \cite{Tharwat2010Oneclass,Butkovic2009Onsome}).

Many existing solution techniques \cite{Tharwat2010Oneclass,Butkovic2009Introduction,Butkovic2009Onsome,Butkovic2010Maxlinear,Gaubert2012Tropical} offer an iterative algorithm that gives particular solutions if any, and indicates that there is no solution otherwise. Other approaches \cite{Zimmermann2003Disjunctive,Krivulin2005Evaluation,Krivulin2006Eigenvalues,Krivulin2009Methods,Krivulin2011Algebraic,Krivulin2011Analgebraic,Krivulin2011Anextremal,Krivulin2012Anew,Krivulin2012Solution} allow one to get direct solutions in a closed form. However, in most cases only partial solutions are given leaving the problems without complete solution.

In this paper, we consider a multidimensional problem with a nonlinear objective function under linear inequality constraints. The problem is actually a constrained extension of the unconstrained problems in 
\cite{Krivulin2005Evaluation,Krivulin2006Eigenvalues,Krivulin2009Methods,Krivulin2011Algebraic,Krivulin2011Analgebraic,Krivulin2011Anextremal}. A solution approach is developed which reduces the problem to solving a linear inequality with an extended set of unknown variables. We use the approach to obtain a complete solution to the problem  in a closed form under quite general assumptions.

First we outline basic algebraic facts and present preliminary results that underlie subsequent developments. Together with an overview of known results including the solution of linear equations, a new binomial identity for matrix traces is presented. Furthermore, we formulate the extremal problem of interest and then derive a complete solution. To illustrate the obtained results, a two-dimensional problem is examined and its numerical solution is given.

\section{Preliminary Results}

The aim of this section is to provide a basis for further analysis and solution of tropical extremal problems. We briefly outline key algebraic facts and present preliminary results from \cite{Krivulin2006Solution,Krivulin2009Methods}. For further details and considerations, one can refer to \cite{Baccelli1993Synchronization,Cuninghame-Green1994Minimax,Kolokoltsov1997Idempotent,Golan2003Semirings,Heidergott2006Max-plus,Litvinov2007Themaslov,Butkovic2010Maxlinear}.

\subsection{Idempotent Semifield}

Let $\mathbb{X}$ be a set that is endowed with addition $\oplus$ and multiplication $\otimes$ and contains the zero element $\mathbb{0}$ and the identity $\mathbb{1}$. We suppose that both operations are associative and commutative, and multiplication is distributive over addition. Moreover, addition is idempotent, which means that $x\oplus x=x$ for all $x\in\mathbb{X}$, and multiplication is invertible; that is there exist an inverse $x^{-1}$ for any $x\in\mathbb{X}_{+}$, where $\mathbb{X}_{+}=\mathbb{X}\setminus\{\mathbb{0}\}$. Due to the above properties, $\langle\mathbb{X},\mathbb{0},\mathbb{1},\oplus,\otimes\rangle$ is usually referred to as the idempotent commutative semifield.

The power notation with integer exponents is routinely defined in the semifield. For any $x\in\mathbb{X}_{+}$ and an integer $p\geq1$, we have $x^{0}=\mathbb{1}$, $\mathbb{0}^{p}=\mathbb{0}$,
$$
x^{p}=x^{p-1}\otimes x=x\otimes x^{p-1},
\qquad
x^{-p}=(x^{-1})^{p}.
$$

Moreover, we assume that the rational powers are also defined which makes the semifield radicable.

In what follows, the multiplication sign $\otimes$ is omitted as is common in conventional algebra. The power notation is used only in the above sense.

Idempotent addition induces a partial order on $\mathbb{X}$ so that $x\leq y$ if and only if $x\oplus y=y$. We assume that the partial order can always be extended to a total order and so consider the semifield as linearly ordered. From here on, the relation symbols and the operator $\min$ are thought of in terms of this linear order.

An example of a commutative idempotent semifield that is radicable and linearly ordered is the real semifield  $\mathbb{R}_{\max,+}=\langle\mathbb{R}\cup\{-\infty\},-\infty,0,\max,+\rangle$.

\subsection{Idempotent Semimodule}

Consider the Cartesian product $\mathbb{X}^{n}$ with column vectors as its elements. For any vectors $\bm{x}=(x_{i})$ and $\bm{y}=(y_{i})$ from $\mathbb{X}^{n}$, and a scalar $c\in\mathbb{X}$, vector addition and scalar multiplication are routinely defined component-wise
$$
\{\bm{x}\oplus\bm{y}\}_{i}
=
x_{i}\oplus y_{i},
\qquad
\{c\bm{x}\}_{i}
=
cx_{i}.
$$

A geometric interpretation of the operations for the semimodule $\mathbb{R}_{\max,+}^{2}$ is given in the Cartesian coordinates in Fig.~\ref{F-VASM}.
\begin{figure}[ht]
\setlength{\unitlength}{1mm}
\begin{center}
\begin{picture}(35,45)

\put(0,5){\vector(1,0){35}}
\put(5,0){\vector(0,1){45}}

\put(5,5){\thicklines\vector(1,3){10}}
\put(15,35){\line(0,-1){31}}

\put(5,5){\thicklines\vector(3,1){24}}
\put(29,13){\line(-1,0){25}}

\put(5,5){\thicklines\line(4,5){24}}
\put(26,31.75){\thicklines\vector(1,1){3}}

\put(29,35){\line(-1,0){25}}

\put(29,35){\line(0,-1){31}}

\put(0,0){$0$}

\put(13,0){$y_{1}$}
\put(27,0){$x_{1}$}

\put(-1,13){$x_{2}$}
\put(-1,35){$y_{2}$}

\put(14,37){$\bm{x}$}

\put(31,14){$\bm{y}$}

\put(24,37){$\bm{x}\oplus\bm{y}$}

\end{picture}
\hspace{20\unitlength}
\begin{picture}(35,45)

\put(0,5){\vector(1,0){35}}
\put(5,0){\vector(0,1){45}}

\put(5,5){\thicklines\vector(1,3){5}}
\put(10,20){\line(0,-1){16}}
\put(10,20){\line(-1,0){6}}

\put(5,5){\thicklines\vector(2,3){20}}
\put(25,35){\line(0,-1){31}}
\put(25,35){\line(-1,0){21}}

\put(0,10){\line(1,1){30}}

\put(0,0){$0$}
\put(8,23){$\bm{x}$}
\put(27,32){$c\bm{x}$}

\put(-1,20){$x_{2}$}
\put(-2,35){$cx_{2}$}

\put(8,0){$x_{1}$}
\put(23,0){$cx_{1}$}

\end{picture}
\end{center}
\caption{Vector addition (left) and scalar multiplication (right) in $\mathbb{R}_{\max,+}^{2}$.}\label{F-VASM}
\end{figure}
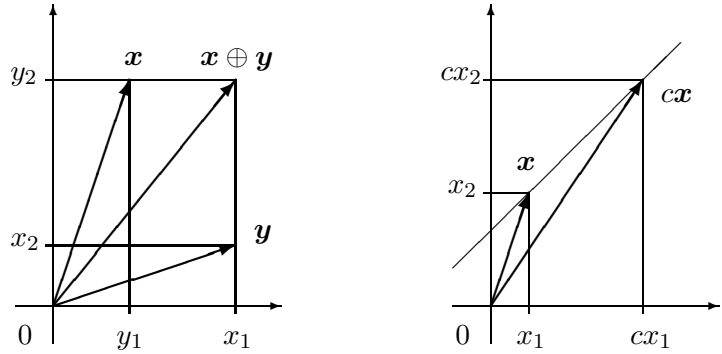

Equipped with these operations, the set $\mathbb{X}^{n}$ forms a semimodule over the idempotent semifield $\mathbb{X}$.

A vector with all zero elements is called the zero vector. A vector is regular if it has no zero elements. The set of all regular vectors in $\mathbb{X}^{n}$ is denoted by $\mathbb{X}_{+}^{n}$.

For any nonzero column vector $\bm{x}=(x_{i})$, we define a row vector $\bm{x}^{-}=(x_{i}^{-})$ with the elements $x_{i}^{-}=x_{i}^{-1}$ if $x_{i}>\mathbb{0}$, and $x_{i}^{-}=\mathbb{0}$ otherwise.

For any regular vectors $\bm{x}$ and $\bm{y}$, the component-wise inequality $\bm{x}\leq\bm{y}$ implies $\bm{x}^{-}\geq\bm{y}^{-}$.

A vector $\bm{y}$ is linearly dependent on vectors $\bm{x}_{1},\ldots,\bm{x}_{m}$ if $\bm{y}=c_{1}\bm{x}_{1}\oplus\cdots\oplus c_{m}\bm{x}_{m}$ for some scalars $c_{1},\ldots,c_{m}$. Specifically, vectors $\bm{y}$ and $\bm{x}$ are collinear if there exists a scalar $c$ such that $\bm{y}=c\bm{x}$.

Given vectors $\bm{x}_{1},\ldots,\bm{x}_{m}$, the set of linear combinations $c_{1}\bm{x}_{1}\oplus\cdots\oplus c_{m}\bm{x}_{m}$ for all $c_{1},\ldots,c_{m}$ is a linear span of the vectors, which forms a subsemimodule. A graphical example of the linear span of two vectors is shown in Fig.~\ref{F-LS}.
\begin{figure}[ht]
\setlength{\unitlength}{1mm}
\begin{center}
\begin{picture}(50,40)

\put(0,5){\vector(1,0){50}}
\put(5,0){\vector(0,1){40}}

\put(5,5){\thicklines\vector(1,3){4.25}}
\put(5,5){\thicklines\vector(2,3){17}}

\put(5,5){\thicklines\vector(4,1){23}}
\put(5,5){\thicklines\vector(2,1){34}}

\put(1.5,10){\thicklines\line(1,1){26}}
\multiput(2.5,11)(1,1){25}{\line(1,0){1}}

\put(17,0){\thicklines\line(1,1){26}}
\multiput(18,1)(1,1){25}{\line(-1,0){1}}

\put(39,30.5){\line(-1,0){35}}
\put(39,30.5){\line(0,-1){26.5}}

\put(10,18){\line(-1,0){6}}
\put(28,11){\line(0,-1){7}}

\put(5,5){\thicklines\vector(4,3){34}}

\put(0,0){$0$}

\put(7,22){$\bm{x}_{2}$}
\put(0,24){$c_{2}$}
\put(15,34){$c_{2}\bm{x}_{2}$}

\put(30,9){$\bm{x}_{1}$}
\put(32,0){$c_{1}$}
\put(42,20){$c_{1}\bm{x}_{1}$}

\put(31,34){$c_{1}\bm{x}_{1}\oplus c_{2}\bm{x}_{2}$}

\end{picture}
\end{center}
\caption{Linear span of two vectors in $\mathbb{R}_{\max,+}^{2}$.}\label{F-LS}
\end{figure}
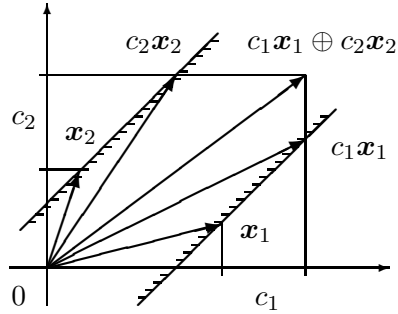

\subsection{Matrix Algebra}

Let $\mathbb{X}^{n\times n}$ be the set of square matrices of order $n$ with entries in $\mathbb{X}$. For any matrices $A=(a_{ij})$ and $B=(b_{ij})$ from  $\mathbb{X}^{n\times n}$, and a scalar $c\in\mathbb{X}$, matrix addition and multiplication together with scalar multiplication follow the conventional rules
$$
\{A\oplus B\}_{ij}
=
a_{ij}
\oplus
b_{ij},
\qquad
\{AB\}_{ij}
=
\bigoplus_{k=1}^{n}a_{ik}b_{kj},
\qquad
\{cA\}_{ij}
=
ca_{ij}.
$$

A matrix with all zero entries is called the zero matrix and denoted by $\mathbb{0}$. A matrix with all off-diagonal entries equal to $\mathbb{0}$ is a diagonal matrix. If all entries of a matrix above or below the diagonal are zero, the matrix is triangular. A diagonal matrix with $\mathbb{1}$ on the diagonal is the identity matrix denoted by $I$.

A matrix is reducible if it can be put into a block-triangular form by simultaneous permutations of rows and columns. Otherwise, the matrix is irreducible.

The matrix power is introduced in the regular way. For any matrix $A$ and integer $p\geq1$, we have
$$
A^{0}
=
I,
\qquad
A^{p}
=
A^{p-1}A
=
AA^{p-1}.
$$

\subsubsection{Matrix Trace}

For any matrix $A$, the trace of $A$ is routinely given by
$$
\mathop\mathrm{tr}A
=
a_{11}\oplus\cdots\oplus a_{nn}.
$$

It is not difficult to verify that for any matrices $A$ and $B$, and a scalar $c$ it holds
\begin{gather*}
\mathop\mathrm{tr}(A\oplus B)
=
\mathop\mathrm{tr}A
\oplus
\mathop\mathrm{tr}B,
\quad
\mathop\mathrm{tr}(AB)
=
\mathop\mathrm{tr}(BA),
\\
\mathop\mathrm{tr}(cA)
=
c\mathop\mathrm{tr}(A).
\end{gather*}

We apply the above properties to derive a binomial identity for traces to be used below.
\begin{lemma}
For any matrices $A$ and $B$, and an integer $m\geq1$ it holds that
\begin{equation}
\mathop\mathrm{tr}(A\oplus B)^{m}
=
\mathop\mathrm{tr}B^{m}
\\
\oplus\;
\bigoplus_{k=1}^{m}\mathop{\bigoplus\hspace{2.3em}}_{i_{1}+\cdots+i_{k}=m-k}\mathop\mathrm{tr}(AB^{i_{1}}\cdots AB^{i_{k}}).
\label{E-trABm}
\end{equation}
\end{lemma}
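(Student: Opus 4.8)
The plan is to expand the power $(A\oplus B)^{m}$ by repeated use of distributivity and then apply the additive and cyclic properties of the trace established above. First I would observe that, writing out the product of $m$ factors each equal to $A$ or $B$, distributivity gives
$$
(A\oplus B)^{m}
=
\bigoplus_{C_{1},\ldots,C_{m}\in\{A,B\}} C_{1}\cdots C_{m},
$$
the idempotent sum being taken over all $2^{m}$ words of length $m$ in the letters $A$ and $B$. Taking traces and using $\mathop\mathrm{tr}(X\oplus Y)=\mathop\mathrm{tr}X\oplus\mathop\mathrm{tr}Y$ then yields $\mathop\mathrm{tr}(A\oplus B)^{m}=\bigoplus_{w}\mathop\mathrm{tr}(w)$, where $w$ ranges over the same set of words.

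Next I would sort the words by the number $k$ of occurrences of the factor $A$. The unique word with $k=0$ is $B^{m}$, contributing the term $\mathop\mathrm{tr}B^{m}$. Every word with $k\geq1$ occurrences of $A$ can be written uniquely as $B^{j_{0}}AB^{j_{1}}\cdots AB^{j_{k}}$ with exponents $j_{0},\ldots,j_{k}\geq0$ satisfying $j_{0}+\cdots+j_{k}=m-k$. Applying the cyclic identity $\mathop\mathrm{tr}(XY)=\mathop\mathrm{tr}(YX)$ to move the leading block $B^{j_{0}}$ to the end, I would reduce its trace to that of a canonical word beginning with $A$:
$$
\mathop\mathrm{tr}(B^{j_{0}}AB^{j_{1}}\cdots AB^{j_{k}})
=
\mathop\mathrm{tr}(AB^{i_{1}}\cdots AB^{i_{k}}),
$$
where $i_{1}=j_{1},\ldots,i_{k-1}=j_{k-1}$ and $i_{k}=j_{k}+j_{0}$, so that still $i_{1}+\cdots+i_{k}=m-k$.

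Finally, I would collect the contributions for each fixed $k$. Several distinct words may reduce to the same canonical trace $\mathop\mathrm{tr}(AB^{i_{1}}\cdots AB^{i_{k}})$, but this is harmless: idempotency of addition, $x\oplus x=x$, lets me discard all repetitions, so that the subsum over words with exactly $k$ copies of $A$ collapses to $\bigoplus_{i_{1}+\cdots+i_{k}=m-k}\mathop\mathrm{tr}(AB^{i_{1}}\cdots AB^{i_{k}})$. Summing the term for $k=0$ together with those for $k=1,\ldots,m$ gives precisely the right-hand side of \eqref{E-trABm}.

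I expect the main obstacle to be organizational rather than conceptual: getting the cyclic reduction and the index bookkeeping exactly right, in particular verifying that every composition $i_{1}+\cdots+i_{k}=m-k$ is realized by at least one word and that the exponent sum is preserved under the rotation. Idempotency is what keeps the argument clean, since it removes any need to track how many words map to a given canonical form.
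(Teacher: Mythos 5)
Your proposal is correct and follows essentially the same route as the paper's proof: expand $(A\oplus B)^{m}$ into the idempotent sum of all $2^{m}$ words, apply additivity of the trace, and use the cyclic property $\mathop\mathrm{tr}(XY)=\mathop\mathrm{tr}(YX)$ to rotate each word containing $k\geq1$ copies of $A$ into the canonical form $(AB^{i_{1}})\cdots(AB^{i_{k}})$ with $i_{1}+\cdots+i_{k}=m-k$. Your version merely makes explicit the index bookkeeping (in particular $i_{k}=j_{k}+j_{0}$ under the rotation) and the role of idempotency in absorbing repeated canonical terms, both of which the paper leaves implicit.
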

\begin{proof}
First we note that the power $(A\oplus B)^{m}$ can naturally be expanded as the sum of $2^{m}$ products formed by $k$ matrices $A$ and $m-k$ matrices $B$ taken in all possible orders for all $k=0,1,\ldots m$.

Furthermore, we take trace of the sum and consider those terms that have at least one matrix $A$ as a factor. Since two matrices commute under the trace operator, all these terms can be rearranged so that each term has $A$ as the first factor. It remains to see that any product with $k\geq1$ matrices $A$ is then represented as $(AB^{i_{1}})\cdots (AB^{i_{k}})$, where $i_{1},\ldots,i_{k}$ are nonnegative integers such that $i_{1}+\cdots+i_{k}=m-k$.
\end{proof}

\subsubsection{Spectral Radius}

As usual, a scalar $\lambda$ is an eigenvalue of a matrix $A$, if there exists a nonzero vector $\bm{x}$ such that
$$
A\bm{x}
=
\lambda\bm{x}.
$$

Every irreducible matrix has only one eigenvalue, whereas reducible matrices may have several eigenvalues. The maximal eigenvalue (in the sense of the linear order on $\mathbb{X}$) is called the spectral radius of $A$. It is directly calculated as
$$
\lambda
=
\bigoplus_{m=1}^{n}\mathop\mathrm{tr}\nolimits^{1/m}(A^{m}).
$$

The spectral radius $\lambda$ of a matrix $A$ offers a useful extremal property, which holds that
$$
\min\ \bm{x}^{-}A\bm{x}
=
\lambda,
$$
where the minimum is over all regular vectors $\bm{x}$.

\subsection{Linear Inequalities}

Let $A\in\mathbb{X}^{n\times n}$ be a given matrix, $\bm{x}\in\mathbb{X}^{n}$ be the unknown vector. Consider the problem of finding regular solutions $\bm{x}$ to the inequality
\begin{equation}
A\bm{x}
\leq
\bm{x}.
\label{I-Axx}
\end{equation}

For any matrix $A$, the solution of the inequality involves a function that is given by
$$
\mathop\mathrm{Tr}(A)
=
\mathop\mathrm{tr}A\oplus\cdots\oplus\mathop\mathrm{tr}A^{n},
$$
and a star operator that is defined as
$$
A^{\ast}
=
I\oplus A\oplus\cdots\oplus A^{n-1}.
$$

In the case of irreducible matrices, the solution is given by the following result.

\begin{theorem}\label{T-IAxx}
Let $\bm{x}$ be the general regular solution of inequality \eqref{I-Axx} with an irreducible matrix $A$.

Then the following statements hold:
\begin{enumerate}
\item If $\mathop\mathrm{Tr}(A)\leq\mathbb{1}$, then $\bm{x}=A^{\ast}\bm{u}$ for all $\bm{u}\in\mathbb{X}_{+}^{n}$.
\item If $\mathop\mathrm{Tr}(A)>\mathbb{1}$, then there is no regular solution.
\end{enumerate}
\end{theorem}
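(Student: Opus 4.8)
The plan is to translate the hypothesis $\mathop\mathrm{Tr}(A)\leq\mathbb{1}$ into a statement about the spectral radius and then dispatch the two cases with different tools. First I would observe that, by the formula $\lambda=\bigoplus_{m=1}^{n}\mathop\mathrm{tr}^{1/m}(A^{m})$, the inequality $\lambda\leq\mathbb{1}$ holds exactly when $\mathop\mathrm{tr}(A^{m})\leq\mathbb{1}$ for every $m=1,\ldots,n$, which is precisely the condition $\mathop\mathrm{Tr}(A)\leq\mathbb{1}$. Consequently, the two cases of the theorem are exactly $\lambda\leq\mathbb{1}$ and $\lambda>\mathbb{1}$, and I may use the extremal property $\min\bm{x}^{-}A\bm{x}=\lambda$ freely.

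For the first statement, the central step is to prove that $AA^{\ast}\leq A^{\ast}$ whenever $\mathop\mathrm{Tr}(A)\leq\mathbb{1}$. I would argue through the weighted-graph interpretation: the entry $\{A^{k}\}_{ij}$ is the maximal weight of a path of length $k$ joining two nodes, and $\mathop\mathrm{tr}(A^{m})\leq\mathbb{1}$ states that every cycle has weight at most $\mathbb{1}$. Since any path of length at least $n$ repeats a node and hence contains a cycle, deleting that cycle cannot decrease the weight, so the maximal weight over paths of length $\geq n$ is already attained by a path of length $<n$. This yields $A^{k}\leq A^{\ast}$ for all $k\geq0$, and in particular $AA^{\ast}=A\oplus\cdots\oplus A^{n}\leq A^{\ast}$. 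Once this is in hand, verifying that $\bm{x}=A^{\ast}\bm{u}$ solves \eqref{I-Axx} is immediate, since $A(A^{\ast}\bm{u})=(AA^{\ast})\bm{u}\leq A^{\ast}\bm{u}$; regularity of $A^{\ast}\bm{u}$ follows from $A^{\ast}\geq I$, which forces $A^{\ast}\bm{u}\geq\bm{u}$. Conversely, given any regular solution $\bm{x}$, iterating $A\bm{x}\leq\bm{x}$ gives $A^{k}\bm{x}\leq\bm{x}$ for all $k$, hence $A^{\ast}\bm{x}\leq\bm{x}$; combined with the trivial bound $A^{\ast}\bm{x}\geq\bm{x}$ this gives $A^{\ast}\bm{x}=\bm{x}$, so $\bm{x}$ has the stated form with $\bm{u}=\bm{x}$.

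For the second statement I would use the extremal property directly. Suppose, for contradiction, that a regular $\bm{x}$ satisfies $A\bm{x}\leq\bm{x}$. Left-multiplying by the row vector $\bm{x}^{-}$ and using $\bm{x}^{-}\bm{x}=\mathbb{1}$ for regular $\bm{x}$, I obtain $\bm{x}^{-}A\bm{x}\leq\mathbb{1}$. On the other hand, the extremal property gives $\bm{x}^{-}A\bm{x}\geq\lambda$. Hence $\lambda\leq\mathbb{1}$, which contradicts $\mathop\mathrm{Tr}(A)>\mathbb{1}$, equivalently $\lambda>\mathbb{1}$; therefore no regular solution can exist.

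The main obstacle is the inequality $AA^{\ast}\leq A^{\ast}$ in the first case: everything else reduces to a short manipulation, but this stabilization of the Kleene star genuinely depends on the cycle-weight bound encoded in $\mathop\mathrm{Tr}(A)\leq\mathbb{1}$. If one prefers a purely algebraic derivation to the graph-theoretic argument sketched above, the same bound can be secured by controlling $A^{n}$ in terms of $\mathop\mathrm{Tr}(A)$ and $A^{\ast}$, but the combinatorial route appears to be the most transparent.
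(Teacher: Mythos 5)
The paper never proves Theorem~\ref{T-IAxx}: it is imported as a preliminary result from \cite{Krivulin2006Solution,Krivulin2009Methods}, so there is no internal argument to compare against, and your proposal has to stand on its own --- which it does. The translation of $\mathop\mathrm{Tr}(A)\leq\mathbb{1}$ into $\lambda\leq\mathbb{1}$ is correct: in a linearly ordered radicable semifield $\mathop\mathrm{tr}\nolimits^{1/m}(A^{m})\leq\mathbb{1}$ iff $\mathop\mathrm{tr}(A^{m})\leq\mathbb{1}$, and an idempotent sum is $\leq\mathbb{1}$ iff every summand is; totality of the order then gives the clean dichotomy between the two cases. The heart of part~1, the stabilization $AA^{\ast}\leq A^{\ast}$, is secured by the standard cycle-deletion argument, and your version of it is sound: the trace condition bounds every cycle of length at most $n$ by $\mathbb{1}$, any path of length $\geq n$ contains such a cycle whose removal does not decrease the weight, hence $A^{k}\leq A^{\ast}$ for all $k\geq0$ and $AA^{\ast}=A\oplus\cdots\oplus A^{n}\leq A^{\ast}$; together with your converse direction (iterate $A\bm{x}\leq\bm{x}$ to get $A^{\ast}\bm{x}\leq\bm{x}$, use $A^{\ast}\geq I$ to get $A^{\ast}\bm{x}=\bm{x}$, take $\bm{u}=\bm{x}$) this correctly identifies $\{A^{\ast}\bm{u}:\bm{u}\in\mathbb{X}_{+}^{n}\}$ as the exact regular solution set. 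Part~2 via $\bm{x}^{-}A\bm{x}\leq\bm{x}^{-}\bm{x}=\mathbb{1}$ against the extremal property $\min\bm{x}^{-}A\bm{x}=\lambda$ is legitimate within this paper, since the extremal property is stated as an independent preliminary fact; be aware, however, that in the source literature that property is itself usually derived from results of exactly this kind, so a circularity-proof alternative is worth knowing: from $A\bm{x}\leq\bm{x}$ one gets $A^{m}\bm{x}\leq\bm{x}$, hence $\{A^{m}\}_{ii}x_{i}\leq x_{i}$ and, since $x_{i}$ is invertible, $\mathop\mathrm{tr}(A^{m})\leq\mathbb{1}$ for all $m=1,\ldots,n$, i.e. $\mathop\mathrm{Tr}(A)\leq\mathbb{1}$ --- the contrapositive of statement~2 with no mention of $\lambda$ at all. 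Finally, note that your argument nowhere uses irreducibility of $A$; this is not an error (your proof simply establishes the more general statement, and the hypothesis in the paper reflects the companion results in \cite{Krivulin2009Methods}, where reducible matrices require a separate treatment of non-regular solutions), but it deserves an explicit remark.
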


Fig.~\ref{F-GSIAxx} offers an example of solution of a linear inequality with a matrix $A=(\bm{a}_{1},\bm{a}_{2})$ in $\mathbb{R}_{\max,+}^{2}$.
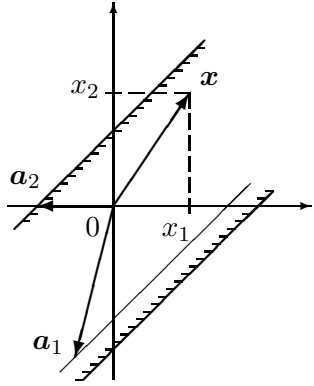
\begin{figure}[ht]
\setlength{\unitlength}{1mm}
\begin{center}
\begin{picture}(40,50)

\put(0,23){\vector(1,0){40}}
\put(14,0){\vector(0,1){50}}

\put(14,23){\thicklines\vector(-1,0){10}}
\put(1,20){\thicklines\line(1,1){25}}
\multiput(2,21)(1,1){24}{\line(1,0){1}}

\put(14,23){\thicklines\vector(-1,-4){5}}
\put(7,1){\line(1,1){25}}

\put(10,0){\thicklines\line(1,1){25}}
\multiput(10,0)(1,1){26}{\line(-1,0){1}}

\put(14,23){\thicklines\vector(2,3){10}}

\multiput(24,38)(0,-2.8){6}{\line(0,-1){2}}

\multiput(24,38)(-3,0){4}{\line(-1,0){2}}

\put(0,26){$\bm{a}_{2}$}
\put(3,4){$\bm{a}_{1}$}

\put(25,39){$\bm{x}$}

\put(20,19){$x_{1}$}

\put(8,38){$x_{2}$}

\put(10,19){$0$}

\end{picture}
\end{center}
\caption{Solution to a linear inequality in $\mathbb{R}_{\max,+}^{2}$.}\label{F-GSIAxx}
\end{figure}

\section{A Constrained Extremal Problem}

Given matrices $A,B\in\mathbb{X}^{n\times n}$, the problem is to find all regular solutions $\bm{x}\in\mathbb{X}_{+}^{n}$ so as to provide
\begin{equation}
\begin{split}
\min\ \bm{x}^{-}A\bm{x},
\\
B\bm{x}
\leq
\bm{x}.
\end{split}
\label{P-xAx}
\end{equation}

Below we give a general solution to the problem and then illustrate the solution with numerical examples in the framework of the semifield $\mathbb{R}_{\max,+}$.

\subsection{The Main Result}

A complete direct solution to the problem under quite general conditions is given as follows.

\begin{theorem}\label{T-xAxBxx}
Suppose that at least one of matrices $A$ and $B$ is irreducible, $\lambda>\mathbb{0}$ is a spectral radius of the matrix $A$, and $\mathop\mathrm{Tr}(B)\leq\mathbb{1}$.

Then the minimum in \eqref{P-xAx} is equal to
\begin{equation}
\theta
=
\bigoplus_{k=1}^{n}\mathop{\bigoplus\hspace{1.1em}}_{0\leq i_{1}+\cdots+i_{k}\leq n-k}\mathop\mathrm{tr}\nolimits^{1/k}(AB^{i_{1}}\cdots AB^{i_{k}})
\label{E-theta}
\end{equation}
and it is attained if and only if
\begin{equation}
\bm{x}
=
(\theta^{-1}A\oplus B)^{\ast}\bm{u}
\label{E-xtheta1ABu}
\end{equation}
for all regular vectors $\bm{u}$.
\end{theorem}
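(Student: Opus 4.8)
The plan is to convert the optimization problem into an existence question for a single linear inequality and then invoke Theorem~\ref{T-IAxx}. The key observation is that for a regular vector $\bm{x}$ and a scalar $\theta>\mathbb{0}$, the scalar inequality $\bm{x}^{-}A\bm{x}\leq\theta$ is equivalent to the vector inequality $\theta^{-1}A\bm{x}\leq\bm{x}$. This is seen entrywise: $\bm{x}^{-}A\bm{x}=\bigoplus_{i,j}x_{i}^{-1}a_{ij}x_{j}\leq\theta$ holds precisely when $a_{ij}x_{j}\leq\theta x_{i}$ for all $i,j$, which is exactly the condition $\{A\bm{x}\}_{i}\leq\theta x_{i}$. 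Combining this with the feasibility constraint $B\bm{x}\leq\bm{x}$, I would encode the statement ``the objective does not exceed $\theta$ and $\bm{x}$ is feasible'' as the single inequality $(\theta^{-1}A\oplus B)\bm{x}\leq\bm{x}$. Thus the minimum of the objective is the least $\theta$ for which this inequality admits a regular solution.

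Before applying Theorem~\ref{T-IAxx} I would verify its hypotheses. The matrix $\theta^{-1}A\oplus B$ is irreducible because at least one of $A,B$ is irreducible and forming the (scaled) sum only enlarges the set of nonzero off-diagonal entries, so the associated digraph stays strongly connected. Positivity of the relevant value of $\theta$ follows from noting that the terms in \eqref{E-theta} with all exponents $i_{1}=\cdots=i_{k}=0$ are exactly $\mathop\mathrm{tr}\nolimits^{1/k}(A^{k})$ for $k=1,\ldots,n$, whose join is the spectral radius $\lambda>\mathbb{0}$; hence the optimal $\theta$ is at least $\lambda>\mathbb{0}$, so $\theta^{-1}$ exists.

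The core computation is to evaluate the applicability condition $\mathop\mathrm{Tr}(\theta^{-1}A\oplus B)\leq\mathbb{1}$ supplied by Theorem~\ref{T-IAxx}. Expanding $\mathop\mathrm{Tr}(\theta^{-1}A\oplus B)=\bigoplus_{m=1}^{n}\mathop\mathrm{tr}(\theta^{-1}A\oplus B)^{m}$ and applying the binomial identity \eqref{E-trABm} with $A$ replaced by $\theta^{-1}A$, each summand $\mathop\mathrm{tr}((\theta^{-1}A)B^{i_{1}}\cdots(\theta^{-1}A)B^{i_{k}})$ carries the scalar factor $\theta^{-k}$ and so equals $\theta^{-k}\mathop\mathrm{tr}(AB^{i_{1}}\cdots AB^{i_{k}})$. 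Reindexing so that for each fixed $k$ the quantity $i_{1}+\cdots+i_{k}=m-k$ ranges over $0,\ldots,n-k$ as $m$ runs from $k$ to $n$, I obtain $\mathop\mathrm{Tr}(\theta^{-1}A\oplus B)=\mathop\mathrm{Tr}(B)\oplus\bigoplus_{k=1}^{n}\theta^{-k}\bigoplus_{0\leq i_{1}+\cdots+i_{k}\leq n-k}\mathop\mathrm{tr}(AB^{i_{1}}\cdots AB^{i_{k}})$. Since $\mathop\mathrm{Tr}(B)\leq\mathbb{1}$ by hypothesis, the condition collapses to $\theta^{-k}\mathop\mathrm{tr}(AB^{i_{1}}\cdots AB^{i_{k}})\leq\mathbb{1}$ for every admissible tuple, i.e. to $\mathop\mathrm{tr}\nolimits^{1/k}(AB^{i_{1}}\cdots AB^{i_{k}})\leq\theta$. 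Taking the join over all such tuples shows it holds exactly when $\theta\geq\theta_{0}$, where $\theta_{0}$ is the right-hand side of \eqref{E-theta}.

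Finally I would assemble the conclusion. For the lower bound, any feasible $\bm{x}$ with objective value $\mu=\bm{x}^{-}A\bm{x}$ yields a regular solution of $(\mu^{-1}A\oplus B)\bm{x}\leq\bm{x}$, forcing $\mathop\mathrm{Tr}(\mu^{-1}A\oplus B)\leq\mathbb{1}$ and hence $\mu\geq\theta_{0}$; so $\theta_{0}$ bounds the objective from below and coincides with the $\theta$ of \eqref{E-theta}. For attainability, at $\theta=\theta_{0}$ the condition holds, so Theorem~\ref{T-IAxx} furnishes the regular solutions $\bm{x}=(\theta^{-1}A\oplus B)^{\ast}\bm{u}$; each such $\bm{x}$ is feasible and satisfies $\bm{x}^{-}A\bm{x}\leq\theta_{0}$, which together with the lower bound gives equality. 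Conversely, any minimizer solves $(\theta_{0}^{-1}A\oplus B)\bm{x}\leq\bm{x}$ and is therefore of the form \eqref{E-xtheta1ABu} by Theorem~\ref{T-IAxx}, which settles the ``only if'' part. I expect the trace expansion and reindexing in the third paragraph to be the main obstacle, both because it requires the binomial identity in precisely the right form and because the bookkeeping of the multi-index ranges must be carried out carefully to match \eqref{E-theta} exactly.
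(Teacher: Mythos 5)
Your proposal is correct and follows essentially the same route as the paper's proof: reduce the problem to the single inequality $(\theta^{-1}A\oplus B)\bm{x}\leq\bm{x}$, apply Theorem~\ref{T-IAxx}, expand $\mathop\mathrm{Tr}(\theta^{-1}A\oplus B)$ via the binomial identity \eqref{E-trABm}, and solve the resulting conditions for $\theta$. Your only departures are local refinements rather than a different method: an entrywise proof of the equivalence $\bm{x}^{-}A\bm{x}\leq\theta \Leftrightarrow \theta^{-1}A\bm{x}\leq\bm{x}$ in place of the paper's argument via $\bm{x}^{-}\bm{x}=\mathbb{1}$ and $\bm{x}\bm{x}^{-}\geq I$, an explicit check of the irreducibility of $\theta^{-1}A\oplus B$ (which the paper asserts without justification), and an endgame that establishes attainment directly instead of presupposing that the minimum exists.
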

\begin{proof}
First we introduce an additional variable and then reduce problem \eqref{P-xAx} to an inequality. Existence conditions for solutions of the inequality is used to evaluate the variable, whereas the solution of the inequality is taken as the solution of the problem. 

Suppose $\theta$ is the minimum of the objective function in problem \eqref{P-xAx} and note that $\theta\geq\lambda>\mathbb{0}$.

The set of regular vectors $\bm{x}$ that yield the minimum is determined by the system
\begin{align*}
\bm{x}^{-}A\bm{x}
&=
\theta,
\\
B\bm{x}
&\leq
\bm{x}.
\end{align*}

Let us examine the first equality $\bm{x}^{-}A\bm{x}=\theta$. Since $\bm{x}^{-}A\bm{x}\geq\theta$ for all $\bm{x}\in\mathbb{X}_{+}^{n}$, the equality can be replaced by the inequality $\bm{x}^{-}A\bm{x}\leq\theta$.

Furthermore, we multiply the inequality by $\theta^{-1}\bm{x}$ from the left. Since $\bm{x}^{-}\bm{x}=\mathbb{1}$ and $\bm{x}\bm{x}^{-}\geq I$ for all regular $\bm{x}$, we have $\theta^{-1}A\bm{x}\leq\theta^{-1}\bm{x}\bm{x}^{-}A\bm{x}\leq\bm{x}$ and so get the inequality $\theta^{-1}A\bm{x}\leq\bm{x}$. Considering that left multiplication of the last inequality by $\theta\bm{x}^{-}$ gives the first one, both inequalities prove to be equivalent.

Now the solution set of the problem is given by the system of inequalities
\begin{align*}
\theta^{-1}A\bm{x}
&\leq
\bm{x},
\\
B\bm{x}
&\leq
\bm{x}.
\end{align*}

The above system is equivalent to one inequality
\begin{equation}
(\theta^{-1}A\oplus B)\bm{x}
\leq
\bm{x},
\label{I-Cthetaxx}
\end{equation}
which takes the form \eqref{I-Axx} with an irreducible matrix.

It follows from Theorem~\ref{T-IAxx} that inequality \eqref{I-Cthetaxx} has regular solutions if and only if
\begin{equation}
\mathop\mathrm{Tr}(\theta^{-1}A\oplus B)
\leq
\mathbb{1}.
\label{I-TrCtheta}
\end{equation}

Consider the function $\mathop\mathrm{Tr}(\theta^{-1}A\oplus B)$ and represent it in the form
$$
\mathop\mathrm{Tr}(\theta^{-1}A\oplus B)
=
\bigoplus_{m=1}^{n}\mathop\mathrm{tr}(\theta^{-1}A\oplus B)^{m}.
$$

By applying binomial identity \eqref{E-trABm} to the right-hand side, we have
$$
\mathop\mathrm{Tr}(\theta^{-1}A\oplus B)
=
\mathop\mathrm{Tr}B
\oplus
\bigoplus_{m=1}^{n}\bigoplus_{k=1}^{m}\mathop{\bigoplus\hspace{2.3em}}_{i_{1}+\cdots+i_{k}=m-k}\theta^{-k}\mathop\mathrm{tr}(AB^{i_{1}}\cdots AB^{i_{k}}).
$$

Furthermore, rearrangement of terms in the last sum gives the following expression
$$
\mathop\mathrm{Tr}(\theta^{-1}A\oplus B)
=
\mathop\mathrm{Tr}B
\oplus
\bigoplus_{k=1}^{n}\mathop{\bigoplus\hspace{1.2em}}_{0\leq i_{1}+\cdots+i_{k}\leq n-k}\theta^{-k}\mathop\mathrm{tr}(AB^{i_{1}}\cdots AB^{i_{k}}).
$$

Considering that $\mathop\mathrm{Tr}B\leq\mathbb{1}$ by the conditions of the theorem, inequality \eqref{I-TrCtheta} reduces to inequalities
$$
\mathop{\bigoplus\hspace{1.2em}}_{0\leq i_{1}+\cdots+i_{k}\leq n-k}\theta^{-k}\mathop\mathrm{tr}(AB^{i_{1}}\cdots AB^{i_{k}})
\leq
\mathbb{1},
$$
which must be valid for all $k=1,\ldots,n$.

By solving the inequalities with respect to $\theta$, we get inequalities
$$
\theta
\geq
\mathop{\bigoplus\hspace{1.2em}}_{0\leq i_{1}+\cdots+i_{k}\leq n-k}\mathop\mathrm{tr}\nolimits^{1/k}(AB^{i_{1}}\cdots AB^{i_{k}}),
$$
or equivalently, one inequality
$$
\theta
\geq
\bigoplus_{k=1}^{n}\mathop{\bigoplus\hspace{1.2em}}_{0\leq i_{1}+\cdots+i_{k}\leq n-k}\mathop\mathrm{tr}\nolimits^{1/k}(AB^{i_{1}}\cdots AB^{i_{k}}).
$$

In order for $\theta$ to be the minimum in problem \eqref{P-xAx}, the last inequality must be satisfied as an equality, which gives \eqref{E-theta}.

It remains to apply Theorem~\ref{T-IAxx} to inequality \eqref{I-Cthetaxx} so as to arrive at the solution in the form \eqref{E-xtheta1ABu}.
\end{proof}

As a particular case of the result, a solution to problem~\ref{P-xAx} without linear constraints can be derived.

\begin{corollary}\label{C-xAx}
Let, in addition to the assumptions of Theorem~\ref{T-xAxBxx}, $A$ be an irreducible matrix and $B=\mathbb{0}$.

Then the minimum in \eqref{P-xAx} is equal to $\lambda$ and it is attained if and only if
$$
\bm{x}
=
(\lambda^{-1}A)^{\ast}\bm{u}
$$
for all regular vectors $\bm{u}$.
\end{corollary}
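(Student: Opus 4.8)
The plan is to derive the corollary by specializing Theorem~\ref{T-xAxBxx} to $B=\mathbb{0}$, so the only real work is to simplify the closed-form expressions \eqref{E-theta} and \eqref{E-xtheta1ABu}. First I would verify that the hypotheses of the theorem hold: the matrix $A$ is irreducible by assumption, which in particular guarantees that at least one of $A$ and $B$ is irreducible, and $\mathop\mathrm{Tr}(\mathbb{0})=\mathbb{0}\leq\mathbb{1}$, so the condition $\mathop\mathrm{Tr}(B)\leq\mathbb{1}$ is met. The theorem therefore applies, and it suffices to evaluate its formulas at $B=\mathbb{0}$.

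The key step is the reduction of \eqref{E-theta}. Here I would invoke the power conventions $\mathbb{0}^{0}=I$ and $\mathbb{0}^{i}=\mathbb{0}$ for $i\geq1$. In each product $AB^{i_{1}}\cdots AB^{i_{k}}$ occurring in \eqref{E-theta}, any exponent $i_{j}\geq1$ makes the factor $B^{i_{j}}=\mathbb{0}$ and hence annihilates the product; its trace is then $\mathbb{0}$, which is neutral under the idempotent sum and may be discarded. Since the indices satisfy $i_{1}+\cdots+i_{k}\geq0$, the only term surviving for each $k$ is the one with $i_{1}=\cdots=i_{k}=0$, where every $B^{i_{j}}=I$ and the product collapses to $A^{k}$. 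Consequently
\[
\theta
=
\bigoplus_{k=1}^{n}\mathop\mathrm{tr}\nolimits^{1/k}(A^{k})
=
\lambda,
\]
the right-hand side being exactly the spectral-radius formula recorded in the subsection on the spectral radius. Substituting $B=\mathbb{0}$ and $\theta=\lambda$ into \eqref{E-xtheta1ABu}, and using $\lambda^{-1}A\oplus\mathbb{0}=\lambda^{-1}A$, then gives $\bm{x}=(\lambda^{-1}A)^{\ast}\bm{u}$ for all regular $\bm{u}$.

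I do not anticipate any genuine obstacle, since the statement is a pure specialization; the only point demanding care is the correct handling of the powers of the zero matrix, and in particular the distinction between $\mathbb{0}^{0}=I$ and $\mathbb{0}^{i}=\mathbb{0}$ for $i\geq1$, which is what singles out the all-zero multi-index. As an independent sanity check on the value of the minimum, observe that with $B=\mathbb{0}$ the constraint $B\bm{x}\leq\bm{x}$ becomes $\mathbb{0}\leq\bm{x}$, which holds for every regular $\bm{x}$; thus problem~\eqref{P-xAx} reduces to the unconstrained minimization of $\bm{x}^{-}A\bm{x}$, whose minimum is $\lambda$ by the extremal property of the spectral radius quoted earlier.
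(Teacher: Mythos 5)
Your proposal is correct and follows essentially the same route as the paper: the paper's proof also reduces to verifying $\theta=\lambda$ by splitting \eqref{E-theta} into the all-zero multi-index terms $\bigoplus_{k=1}^{n}\mathop{\mathrm{tr}}\nolimits^{1/k}(A^{k})$ and the terms with some $i_{j}\geq 1$, which vanish when $B=\mathbb{0}$. Your explicit handling of $\mathbb{0}^{0}=I$ versus $\mathbb{0}^{i}=\mathbb{0}$ and the hypothesis check are just spelled-out versions of steps the paper leaves implicit.
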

\begin{proof}
It is sufficient to verify that we have $\theta=\lambda$. For this purpose we rewrite \eqref{E-theta} in the form
$$
\theta
=
\bigoplus_{k=1}^{n}\mathop\mathrm{tr}\nolimits^{1/k}(A^{k})
\oplus
\bigoplus_{k=1}^{n-1}\mathop{\bigoplus\hspace{1.1em}}_{1\leq i_{1}+\cdots+i_{k}\leq n-k}\mathop\mathrm{tr}\nolimits^{1/k}(AB^{i_{1}}\cdots AB^{i_{k}}).
$$

With $B=\mathbb{0}$ we arrive at the desired result. 
\end{proof}

Note that this result is consistent with that in \cite{Krivulin2012Solution}.

\subsection{Illustrative Examples}

Consider problem \eqref{P-xAx} in $\mathbb{R}_{\max,+}^{2}$ with matrices
$$
A
=
\left(
\begin{array}{rr}
0 & -3
\\
-5 & -2
\end{array}
\right),
\qquad
B
=
\left(
\begin{array}{rr}
0 & -8
\\
5 & -3
\end{array}
\right).
$$

First we apply Corollary~\ref{C-xAx} to solve the problem without constraints. We calculate the matrix
$$
A^{2}
=
\left(
\begin{array}{rr}
0 & -3
\\
-5 & -4
\end{array}
\right)
$$
and then find
$$
\lambda
=
\mathop\mathrm{tr}A
\oplus
\mathop\mathrm{tr}\nolimits^{1/2}(A^{2})
=
0
=
\mathbb{1}.
$$

Since $\lambda^{-1}A=A$, we have
$$
(\lambda^{-1}A)^{\ast}
=
A^{\ast}
=
I\oplus A
=
\left(
\begin{array}{rr}
0 & -3
\\
-5 & 0
\end{array}
\right).
$$ 

The solution to the problem takes the form
$$
\bm{x}
=
\left(
\begin{array}{rr}
0 & -3
\\
-5 & 0
\end{array}
\right)
\bm{u},
\qquad
\forall\bm{u}\in\mathbb{R}^{2}.
$$

Note that in a similar way, one can obtain the solution to the inequality $B\bm{x}\leq\bm{x}$ in the form
$$
\bm{x}
=
\left(
\begin{array}{rr}
0 & -8
\\
5 & 0
\end{array}
\right)
\bm{u},
\qquad
\forall\bm{u}\in\mathbb{R}^{2}.
$$

A graphical illustration of the solutions on a plane with the Cartesian coordinates is given in Fig.~\ref{F-UPIC}.
\begin{figure}[ht]
\setlength{\unitlength}{1mm}
\begin{center}
\begin{picture}(37,40)

\put(0,15){\vector(1,0){35}}
\put(18,0){\vector(0,1){40}}

\put(18,15){\thicklines\vector(-3,-2){6}}
\put(6,9){\thicklines\line(1,1){21}}
\multiput(7,10)(1,1){20}{\line(1,0){1}}

\put(7,6){\line(1,1){22}}

\put(18,15){\thicklines\vector(0,-1){10}}
\put(14,1){\thicklines\line(1,1){21}}
\multiput(15,2)(1,1){20}{\line(-1,0){1}}

\put(12,8){$\bm{a}_{2}$}

\put(20,3){$\bm{a}_{1}$}

\end{picture}
\hspace{5\unitlength}
\begin{picture}(37,40)

\put(0,15){\vector(1,0){35}}
\put(18,0){\vector(0,1){40}}

\put(18,15){\thicklines\vector(-4,-1){16}}
\put(0,13){\thicklines\line(1,1){23}}
\multiput(1,14)(1,1){22}{\line(1,0){1}}

\put(1,10){\line(1,1){24}}

\put(18,15){\thicklines\vector(0,1){10}}
\put(3,10){\thicklines\line(1,1){23}}
\multiput(4,11)(1,1){22}{\line(-1,0){1}}

\put(4,7){$\bm{b}_{2}$}

\put(20,22){$\bm{b}_{1}$}

\end{picture}
\end{center}
\caption{Solutions to an unconstrained problem (left) and inequality constraints (right) in $\mathbb{R}_{\max,+}^{2}$.}\label{F-UPIC}
\end{figure}
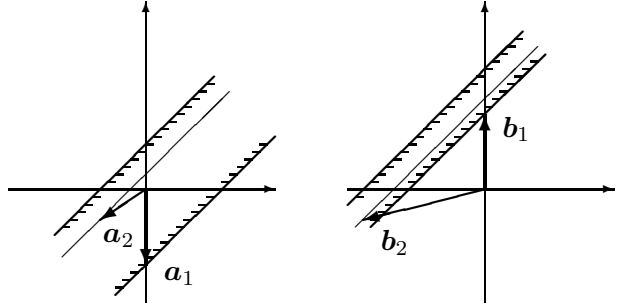

Now we use Theorem~\ref{T-xAxBxx} to handle the constrained problem. We get the matrix
$$
AB
=
\left(
\begin{array}{rr}
2 & -6
\\
3 & -5
\end{array}
\right)
$$
and then calculate
$$
\theta
=
\mathop\mathrm{tr}A
\oplus
\mathop\mathrm{tr}\nolimits^{1/2}(A^{2})
\oplus
\mathop\mathrm{tr}(AB)
=
2.
$$

Furthermore, we have
$$
\theta^{-1}A\oplus B
=
\left(
\begin{array}{rr}
0 & -5
\\
5 & -3
\end{array}
\right)
$$
and then arrive at
$$
(\theta^{-1}A\oplus B)^{\ast}
=
\left(
\begin{array}{rr}
0 & -5
\\
5 & 0
\end{array}
\right).
$$

Since the columns in the obtained matrix are collinear, we take only one of them, say the first. The solution set is then given by
$$
\bm{x}
=
\left(
\begin{array}{c}
0
\\
5
\end{array}
\right)
u
=
u\bm{x}_{0}
$$
for any nonzero number $u\in\mathbb{R}_{\max,+}$.

Fig.~\ref{F-CP} combines the solutions to the unconstrained problem and to the constraints with that of the constrained problem. The solution of the last problem is depicted with a double-thick line that is drawn through the end point of the vector $\bm{x}_{0}$ and coincides with a border of the feasible area defined by the constraints. 
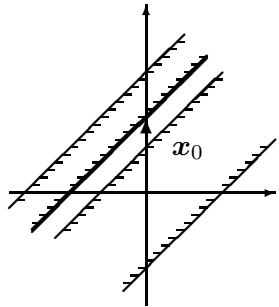
\begin{figure}[ht]
\setlength{\unitlength}{1mm}
\begin{center}
\begin{picture}(37,40)

\put(0,15){\vector(1,0){35}}
\put(18,0){\vector(0,1){40}}

\put(6,9){\thicklines\line(1,1){22}}
\multiput(7,10)(1,1){21}{\line(1,0){1}}


\put(15,2){\thicklines\line(1,1){20}}
\multiput(16,3)(1,1){19}{\line(-1,0){1}}



\put(0,13){\thicklines\line(1,1){23}}
\multiput(1,14)(1,1){22}{\line(1,0){1}}


\put(18,15){\thicklines\vector(0,1){10}}
\put(3,10){\thicklines\line(1,1){23}}
\multiput(4,11)(1,1){23}{\line(-1,0){1}}

\put(3,10.2){\thicklines\line(1,1){23}}
\put(3,9.8){\thicklines\line(1,1){23}}



\put(21,20){$\bm{x}_{0}$}

\end{picture}
\end{center}
\caption{Solution (double-thick line) to the problem under inequality constraints in $\mathbb{R}_{\max,+}^{2}$.}\label{F-CP}
\end{figure}

\bibliographystyle{utphys}

\bibliography{A_tropical_extremal_problem_with_nonlinear_objective_function_and_linear_inequality_constraints}

\end{document}